\newcommand{\oM}{\overline{\mathcal M}}
\def\oM{{\overline{\mathcal{M}}}}
\newcommand{\DR}{\mathrm{DR}}
\newcommand{\PSSRT}{\mathrm{PSSRT}}
\newcommand{\ZZ}{\mathbb{Z}}
\newcommand{\QQ}{\mathbb{Q}}
\newcommand{\ev}{\mathrm{ev}}
\newtheorem{theorem}{Theorem}[section]
\theoremstyle{remark}
\theoremstyle{definition}
\newcommand{\bD}{{\mathbb{D}}}
\numberwithin{equation}{section}
\begin{document}

\title[]{The master relation for polynomiality and equivalences of integrable systems}

\author[X.~Blot]{Xavier Blot}
\address{X.~B.: Korteweg-de Vriesinstituut voor Wiskunde, Universiteit van Amsterdam, Postbus 94248, 1090GE Amsterdam, Nederland}
\email{x.j.c.v.blot@uva.nl}	

\author[A.~Sauvaget]{Adrien Sauvaget}
\address{A.~S.: CNRS, Université de Cergy-Pontoise, Laboratoire AGM, UMR 8088, 2 av. Adolphe Chauvin 95302,
	Cergy-Pontoise Cedex, France}
\email{adrien.sauvaget@math.cnrs.fr} 

\author[S.~Shadrin]{Sergey Shadrin}
\address{S.~S.: Korteweg-de Vriesinstituut voor Wiskunde, Universiteit van Amsterdam, Postbus 94248, 1090GE Amsterdam, Nederland}
\email{s.shadrin@uva.nl}	

\begin{abstract} We prove the so-called master relation in the tautological ring of the moduli space of curves that implies polynomial properties of the Dubrovin--Zhang hierarchies associated to different versions of cohomological field theories as well as their equivalences to the corresponding double ramification hierarchies.
\end{abstract}

\maketitle

\section{Introduction}

The main goal of the paper is to prove the so-called {\em master relation} in $R^*(\oM_{g,n+m})$, $g\geq 0$, $n,m\geq 1$, $2g-2+n+m>0$, conjectured in~\cite[Conjecture 3.4]{BLS}. This master relation is proven in \emph{op. cit.} in the Gorenstein quotient and used to imply the strong form of the {\em DR/DZ equivalence} conjecture due to Buryak-Dubrovin-Gu\'er\'e-Rossi~\cite{BDGR1,BDGR20} (see also the foundational work of Buryak~\cite{Bur} for an earlier weaker version of the DR/DZ equivalence conjecture).

The master relation in the Gorenstein quotient is sufficient for the most important applications to integrable systems associated with ordinary Cohomological field theories (CohFTs), partial CohFTs, (ordinary or partial), and F-CohFTs. Indeed, the examples considered in the literature in the context of integrable systems are constructed in terms of the tautological classes. Yet, it was believed for a long time that polynomiality and equivalence properties of integrable systems of topological type are governed by actual relations in the tautological ring of the moduli space of curves. To this end, a number of conjectures were made, see~\cite[Conjecture 2.5]{BGR19} and~\cite[Conjectures 1, 2, 3]{BS22}. All these conjectures were shown through a sequence of works~\cite{BS22,BLRS,BLS,BLS-Omega} to follow from the master relation. Therefore, the present paper settles these conjectures as well. 

Despite the fact that the master relation that we prove in this paper is of crucial importance for integrable systems, we won't survey these applications as it is already done in a very detailed way in~\cite[Section 4]{BS22}. In order to keep this paper short, we focus on the statement and proof of the master relation itself. It is worth noticing that our proof follows from a direct application of the virtual localization formula for moduli spaces of (relative) stable maps to $\mathbb{P}^1$~\cite{GV}, while the previous proof in the Gorenstein quotient relied on a series of reduction steps to control the intersection of the master relation with generators of the tautological rings.

\subsection{Acknowledgments} X.~B. and S.~S. are supported by the Netherlands Organization for Scientific Research.

\section{The master relation} \label{sec:master-relation}

\subsection{Definition} We follow the exposition in~\cite{BLS}.

\subsubsection{Pre-stable star rooted trees}
We fix integers $m\geq 1$, $n\geq 1$, and $g\geq 0$, such that $2g-2+n+m>0$.  We denote by $\PSSRT_{g,n,m}$ the set of  {\em $n+m$-pre-stable star rooted trees of genus $g$}, that is the set of pre-stable graphs 
$$T=(V,H,\iota \colon H\to H,g\colon V\to \mathbb{Z}_{\geq 0}, H^{\iota}\simeq \{\sigma_1,\ldots, \sigma_{n+m}\} )$$
of genus $g$ with $n+m$ legs, satisfying the following constraints:
\begin{itemize}
	\item If $E(T)$ stands for the set of {\em edges} $H^\iota$, then the graph $(V,E)$ is a rooted tree, and all edges are between the root and another vertex (hence the term ``star''). 
	\item The legs $\sigma_{n+1},\dots,\sigma_{n+m}$ are attached to the root vertex, while $\sigma_{1},\dots,\sigma_{n}$ are attached to non-root vertices. In particular, the root vertex $v_r$ is uniquely determined by the rest of the data. 
	\item For each $v\in V(T)$, let $H(v)$ be the set of half-edges incident to $v$ (including the legs). The pre-stability condition means that $2g(v)-2+|H(v)| \geq 0$ for every $v\in V(T)$. 
	\item There is at least one leg attached to each vertex. 
\end{itemize} 

We will consider the ring $Q=\mathbb{Q}(a_1,\ldots,a_n)$ (where the $a_i$'s are formal variable). If  $T$ is a graph in $\PSSRT_{g,n,m}$, then each edge $e$ connects the root vertex $v_r$ to a non-root vertex that we denote $v_e$. We set $a(e)$ to be the sum of $a_i$'s associated with the legs attached to $v_e$. If a half-edge $h$  is the leg $\sigma_i$, then $a(h)$ stands for $a_i$. 

\subsubsection{Classes assigned to a tree}  We want to assign to $T$ a class $\Xi(T)$ in $R^*(\oM_{g,n+m})\otimes_{\QQ} Q[u,u^{-1}]$ (where $u$ is an extra formal variable used to control the degree). In order to construct this class, we first assign a class to each vertex.

Let $v$ be a vertex of $T$. We will consider the moduli space of curves $\oM_{g(v),|H(v)|}$. If $v$ is the root vertex $v_r$ we have a natural isomorphism $H(v_r) \cong E(T) \sqcup \{\sigma_{n+1},\dots,\sigma_{n+m}\}$ and we associate the first $|E(T)|$ marked points in the corresponding space $\oM_{g(v_r),|H(v_r)|}$ with the edges, and the remaining $m$ points with the legs $\{\sigma_{n+1},\dots,\sigma_{n+m}\}$. We set
\begin{align}
	\Psi(v_r)& \coloneqq \prod_{e\in E(T)} \frac{1}{1-a({e}) \psi_e} \in R^*(\oM_{g(v_r),|E(T)|+m})\otimes_{\QQ}Q
\end{align}
where $\psi_e$ stands for the $\psi$-class attach the half-edge part of the edge $e$. In the exceptional unstable case $g(v_r)=0, m=1, |E(T)|=1$, we formally assign to the root vertex the following class:
\begin{align}
	\Psi(v_r)\coloneqq a(e)^{-1}\in R^{-1}(\oM_{0,2})\otimes_{\QQ}Q,
\end{align}
where $R^{-1}(\oM_{0,2})$ is identified with $\QQ$ and the negative cohomological degree is just formally assigned to allow to treat this case non-exceptionally in what follows.

If $v$ is a non-root vertex $v_e\not=v_r$ for some edge $e$, the we associate the first $|H(v_e)|-1$ marked points in the space $\oM_{g(v_e),|H(v_e)|}$ with the legs attached to $v_e$, and the last marked point with the edge $e$ that connects $v_e$ to $v_r$. We set
\begin{align}
	\bD(v_{e})& \coloneqq \frac{\lambda_{g(v_{e})}\DR_{g(v_{e})}\big(a(h_1),\dots,a(h_{|H(v_{e})|-1}),-a(e)\big)}{1-a(e)\psi_{|H(v_e)|}}   \in R^*(\oM_{g(v_e),|H(v_e)|})\otimes_{\QQ}Q.
\end{align}
As $a(e) =a(h_1)+\cdots+a(h_{|H(v_e)|-1})$,  the class $\DR_{g(v_{e})}\big(a(h_1),\dots,a(h_{|H(v_{e})|-1}),-a(e)\big)$ is the corresponding double ramification cycle.  In the exceptional unstable case $g(v_e)=0, H(v_e)=2$ we formally assign to this vertex the following class:
\begin{align}
	\bD(v_{e})\coloneqq a(e)^{-1}\in R^{-1}(\oM_{0,2})\otimes_{\QQ}Q,
\end{align}
where, as it was for the root vertex,  the negative cohomological degree is just formally assigned to allow the treatment of this case non-exceptionally. With this system of notation, we set
\begin{align}
	\Xi(T)\coloneqq u^{2g-2+m} \left(\prod_{e\in E(T)} \frac{a(e)}{u}\right) (b_T)_* \left( \left(\sum_{d=-1}^\infty \frac{\Psi(v_r)_d}{(-u)^d}\right) \otimes\bigotimes_{e\in E(T)} \left( \sum_{d=-1}^\infty \frac{\bD(v_e)_d}{u^d} \right) \right),
\end{align} 
where $(b_T)_*\colon \bigotimes_{v \in V(T)} R^{*}(\oM_{g(v),|H(v)|})\to R^{*}(\oM_{g,n+m})$ is the boundary push-forward (composed with the contraction of unstable components) map extended by linearity with the coefficients in $Q[u,u^{-1}]$, and $\alpha_d$ stands for the projection to $R^{*}(\oM_{g(v),|H(v)|})$ of a tautological class.
Note that the resulting formula is a polynomial in $a_i$'s of degree bounded by $3g-3+n+m$.

\subsection{Statement and proof} For any $g\geq 0$, $m,n\geq 1$, $2g-2+n+m>0$,  we set 
\begin{align}
	\Xi^m_{g,n} \coloneqq \sum_{T\in \PSSRT_{g,n,m}} \Xi(T).
\end{align}
\begin{theorem} \label{conj:master-relation} We have
\begin{align}
	\Xi^m_{g,n} \in R^{*}(\oM_{g,n+m}) \otimes_{\QQ} \QQ[a_1,\dots,a_n,u].
\end{align}
(in other words, the coefficients of all negative degrees of $u$ vanish). 
\end{theorem}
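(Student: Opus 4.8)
The plan is to exhibit $\Xi^m_{g,n}$ as the virtual localization expansion of a single global class whose equivariant pushforward to $\oM_{g,n+m}$ is manifestly polynomial in $u$; the vanishing of the negative $u$-powers is then automatic. Since both sides of the asserted identity are polynomial in $a_1,\dots,a_n$, I would first fix positive integers $a_i$ and set $d\coloneqq\sum_{i=1}^n a_i$, prove the statement there, and extend by polynomiality. Let $\mathcal{K}$ be the moduli space of genus-$g$ relative stable maps to $\mbP^1$ of degree $d$, relative over $\infty$ with ramification profile $(a_1,\dots,a_n)$ recorded by the markings $\sigma_1,\dots,\sigma_n$, and carrying $m$ further interior markings $\sigma_{n+1},\dots,\sigma_{n+m}$; let $\pi\colon\mathcal{K}\to\oM_{g,n+m}$ be the forgetful-and-stabilize morphism, and let $\mbP^1$ carry the standard $\mbC^{*}$-action with fixed points $0,\infty$ and tangent weights $\pm u$. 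Capping a suitable (equivariantly lifted) insertion — the Hodge class $\lambda_g$ together with the conditions $\mathrm{ev}_{\sigma_{n+j}}^*[0]$ forcing the last $m$ markings to the fixed point $0$ — against $[\mathcal{K}]^{\mathrm{vir}}$ and pushing forward equivariantly produces a class in $H^*(\oM_{g,n+m})\otimes\QQ[u]$. Because the $\mbC^{*}$-action on the target is trivial, this class is a polynomial in $u$, with no negative powers.

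Next I would expand the same equivariant pushforward using the Graber--Vakil virtual localization formula \cite{GV}. The $\mbC^{*}$-fixed loci of $\mathcal{K}$ are indexed by localization graphs recording which domain components are contracted over $0$, which map to the rubber over $\infty$, and which are multiple covers joining the two sides; for the present geometry these graphs should be exactly the star rooted trees of $\PSSRT_{g,n,m}$, the root recording the components over $0$ and the outer vertices the rubber pieces over $\infty$. I expect the dictionary to read: the root vertex $v_r$ is the component contracted to $0$, free and carrying the $m$ markings forced there, whose node-smoothing contributions to $1/e_{\mbC^{*}}(N^{\mathrm{vir}})$ assemble into the $\psi$-class series $\Psi(v_r)$; each non-root vertex $v_e$ is a rubber component over $\infty$ carrying a set of relative markings of total weight $a(e)$, whose rubber contribution capped with $\lambda_g|_{F}$ is the factor $\bD(v_e)$ built from $\lambda_{g(v_e)}\DR_{g(v_e)}(\dots,-a(e))$; and each edge $e$ is a fully ramified degree-$a(e)$ cover, contributing the automorphism and Euler-class factors that combine into $\tfrac{a(e)}{u}$ and the node-smoothing series $\tfrac{1}{1-a(e)\psi}$ at its two ends. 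Tracking the equivariant parameter, the virtual-dimension count yields the global normalization $u^{2g-2+m}\prod_e\tfrac{a(e)}{u}$, and the total contribution of the fixed locus attached to $T$ becomes exactly $\Xi(T)$, with $u$ identified with the equivariant parameter and the sign $(-u)^{-d}$ arising at the root (tangent weight $+u$ at $0$) versus $u^{-d}$ at the outer vertices.

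Combining the two computations gives $\Xi^m_{g,n}=\sum_{T}\Xi(T)=\pi_*\big(\lambda_g\,\prod_{j=1}^m\mathrm{ev}_{\sigma_{n+j}}^*[0]\cap[\mathcal{K}]^{\mathrm{vir}}\big)$, which is polynomial in $u$; hence all negative powers of $u$ vanish, as claimed. The exceptional unstable configurations ($g(v)=0$ with a bivalent vertex) match fixed loci supported on the smallest orbits and are bookkept by the formal conventions $\Psi(v_r)=a(e)^{-1}$ and $\bD(v_e)=a(e)^{-1}$. I expect the main obstacle to be precisely this term-by-term identification: pinning down the exact relative and stationary conditions that make the $0$-side organize into a single root vertex (so that the fixed loci are genuinely the star rooted trees, with the $m$ markings on the root and the correct marking count), extracting the $\tfrac{1}{1-a(e)\psi}$ edge factors from the geometric series over node smoothings, and verifying that the restriction of $\lambda_g$ distributes so that a full $\lambda_{g(v_e)}$ lands on each rubber vertex while the root carries no Hodge class — and that the automorphism factors of the covers together with the equivariant Euler classes of the virtual normal bundle combine into exactly the displayed powers of $u$, the factors $a(e)/u$, and the signs distinguishing the root from the outer vertices.
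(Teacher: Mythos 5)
Your overall strategy coincides with the paper's: fix positive integers $a_i$, realize $\Xi^m_{g,n}$ as the virtual localization expansion of an equivariant pushforward from $\oM_{g,n+m}(\mathbb{P}^1,a_1,\dots,a_n)$ with the last $m$ markings forced to $0$, and conclude polynomiality in $u$ because the target of the pushforward carries the trivial action. However, the insertion you cap against $[\mathcal{K}]^{\mathrm{vir}}$ is wrong, and this is not a bookkeeping issue but the crux of the argument. The paper does not insert $\lambda_g$; it inserts $(-1)^{g+m}\prod_{j}\ev^*_{n+j}([0])\cup e_{\mathbb{C}^*}(R^1\pi_*f^*\mathcal{O}(-1))$, where $\mathcal{O}(-1)$ is linearized with fiber weights $-1$ at $0$ and $0$ at $\infty$. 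Two things go wrong with $\lambda_g$. First, nothing restricts the localization graphs to star rooted trees: a connected rubber component over $\infty$ may carry several edges joining it to one or more contracted components over $0$, and the resulting dual graph can still be a tree (compact type), so $\lambda_g$ does not kill it. The paper kills these configurations because, by \cite[Equation~(3.13)]{GKLS}, the chosen linearization makes $e_{\mathbb{C}^*}(R^1\pi_*f^*\mathcal{O}(-1))$ contribute a factor $0^{|E(v)|-1}$ for each vertex $v$ over $\infty$; only then does connectivity force a unique root over $0$ and exactly one edge per outer vertex. Second, on a compact-type fixed locus $\lambda_g$ restricts to $\prod_v\lambda_{g(v)}$, which puts an unwanted $\lambda_{g(v_r)}$ on the root, while $\Psi(v_r)$ in the definition of $\Xi(T)$ carries no Hodge class at all. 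You correctly identified this as the main obstacle, but it cannot be resolved with your insertion: what actually happens in the paper is that the Euler class contributes the full Chern polynomial $\sum_i\lambda_i u^{g(v_r)-i}$ at the root, which cancels the factor $\sum_i(-1)^i\lambda_i u^{g(v_r)-i}$ coming from the virtual normal bundle via Mumford's relation, leaving the Hodge-free root contribution, while depositing exactly one $\lambda_{g(v_e)}$ on each outer vertex.

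A further symptom that the insertion is wrong is degree accounting: $R^1\pi_*f^*\mathcal{O}(-1)$ has rank $g+a_1+\cdots+a_n-1$, and its equivariant Euler class supplies both the powers of $u$ and the combinatorial edge factors $a(e)^{a(e)}/a(e)!$ that cancel against the corresponding factors $a(e)^{a(e)+1}/a(e)!$ from the edge contributions of $[\mathcal{K}]^{\mathrm{vir}}$ to produce the clean prefactor $\prod_e a(e)/u$ in $\Xi(T)$; a degree-$g$ class such as $\lambda_g$ cannot produce these cancellations, and the resulting pushforward would not match $\Xi^m_{g,n}$ even up to the graphs you do account for. So the proposal, as written, proves polynomiality of a different class. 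To repair it you would need to replace $\lambda_g$ by the equivariant Euler class above with the stated linearization, after which the rest of your outline (localization over $\PSSRT_{g,n,m}$, the unstable conventions, the identification of edge and node-smoothing factors) goes through as in the paper.
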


\begin{proof}
Since $\Xi^m_{g,n}$ is a polynomial in $a_1,\dots,a_n$ of degree at most $3g-3+m+n$, it is sufficient to prove that $\Xi^m_{g,n}(a_1,\dots,a_n)$ is a polynomial in $u$ for each $a_1,\dots,a_n\in \ZZ_{>0}$. Note also that it is sufficient to consider the cohomology with the complex coefficients. Then the proof directly generalizes the argument in~\cite[Proof of Theorem 5.3]{BS22} along the lines of computation done~\cite[Section 3.2]{GKLS}.

We consider the moduli space $\oM_{g,n+m}(\mathbb{P}^1,a_1,\dots,a_n)$ of relative stable  maps to $(\mathbb{P}^1,\infty)$: the first $n$ marked points are mapped to $\infty$ with contact orders prescribed by the $a_i$'s. We consider the standard $\mathbb{C}^*$-action on $\mathbb{P}^1$ and induced action on $\oM_{g,m}(\mathbb{P}^1,a_1,\dots,a_n)$. Let $\pi\colon U\to \oM_{g,m}(\mathbb{P}^1,a_1,\dots,a_n)$ be the universal curve and $f\colon U\to \mathbb{P}^1$ the universal map. The curve $U$ is endowed with the $\mathbb{C}^*$-action making $f$ equivariant. For $i\in \{n+1,\ldots,n+m\}$ we denote  ${\rm ev}_i\colon \oM_{g,n+m}(\mathbb{P}^1,a_1,\dots,a_n)\to \mathbb{P}^1$ the evaluation morphism at the $i$-th marking.

We will consider the line bundle $\mathcal{O}(-1)\to \mathbb{P}^1$. We lift the $\mathbb{C}^*$-action to $\mathcal{O}(-1)\to \mathbb{P}^1$ with the fiber weights at $0$ and $\infty$ equal to $-1$ and $0$ respectively.  We set  
\begin{align}
	I^m_{g,n}\coloneqq  (-1)^{g+m}\ev^*_{n+1}([0])\cup \cdots \cup \ev^*_{n+m}([0])\cup e_{\mathbb{C}^*} (R^1\pi_*f^*\mathcal{O}(-1))
\end{align}
in  $H_*^{\mathbb{C}^*}(\oM_{g,n+m})\cong H_*(\oM_{g,n+m})\otimes_{\mathbb{C}} \mathbb{C}[u]$. In this expression, the class $[0]\in H^2_{\mathbb{C}^*}(\mathbb{P}^1)$ stands for the $\mathbb{C}^*$-equivariant class dual to the point $0$, while $e_{\mathbb{C}^*}$ is the $\mathbb{C}^*$-equivariant Euler class (here, the derived pushforward $R^1\pi_*f^*\mathcal{O}(-1)$ is a vector bundle of rank $g+a_1+\cdots+a_n-1$). 

Let $\epsilon \colon \oM_{g,m}(\mathbb{P}^1,a_1,\dots,a_n)\to \oM_{g,n+m}$ and $[\oM_{g,m}(\mathbb{P}^1,a_1,\dots,a_n)]^{\mathrm{vir}}$ denote the $\mathbb{C}^*$-equivariant virtual fundamental class of $\oM_{g,m}(\mathbb{P}^1,a_1,\dots,a_n)$. We will prove that 
\begin{align} \label{eq:epsilon-star}
	\Xi^{m}_{g,n}(a_1,\dots,a_n,-u)=\epsilon_*\left(I^m_{g,n} \cap [\oM_{g,m}(\mathbb{P}^1,a_1,\dots,a_n)]^{\mathrm{vir}} \right).
\end{align}
To this end, we need to apply the localization formula to~\eqref{eq:epsilon-star} along the lines of~\cite{GV} (see also a survey in~\cite[Appendix]{BS22}), and the computation of $e_{\mathbb{C}^*} (R^1\pi_*f^*\mathcal{O}(-1))$ performed in~\cite[Equation (3.13)]{GKLS}.  As the right-hand side of~\eqref{eq:epsilon-star} belongs to $H_*^{\mathbb{C}^*}(\oM_{g,n+m})\cong H_*(\oM_{g,n+m})\otimes_{\mathbb{C}} \mathbb{C}[u]$, the theorem follows.

The localization formula expresses $[\oM_{g,m}(\mathbb{P}^1,a_1,\dots,a_n)]^{\mathrm{vir}}$ as a sum over bipartite graphs with some vertices corresponding to the components mapped to $0$ and $\infty$. We allow these vertices to correspond to the unstable moduli spaces as well --- formally speaking, in such cases, a separate computation is required, but the final result can be written uniformly with the contributions of the unstable vertices introduced formally as we did above in the definition of $\Xi^m_{g,n}$.  Only part of these bipartite graphs contributes when we intersect the virtual fundamental class with $I^m_{g,n}$:
\begin{itemize}
\item The first $n$ legs are incident to vertices over $\infty$ because of the relative conditions, while the last $m$ legs are over $0$ because of the factors ${\rm ev}_i^*([0])$
\item Our specific choice of lift of the $\mathbb{C}^*$-action to $\mathcal{O}(-1)$ reduces the computation to an expression on star trees. Indeed, the expression of $e_{\mathbb{C}^*} (R^1\pi_*f^*\mathcal{O}(-1))$ has a factor $0^{|E(v)|-1}$ for each vertex $v$ over infinity, where $E(v)$ is the set of edges attached to $v$ (see \cite[Equation (3.13)]{GKLS})\footnote{This observation was used in \cite{Manin} and \cite{FaberPandharipande} to obtain the multiple cover formula for rigid rational curves in CY 3-folds.}.  Therefore, each vertex over $\infty$ should have exactly one edge (otherwise, the class vanishes). 
\end{itemize}
Hence, the bipartite graphs that we consider are, in fact, the graphs in $\PSSRT_{g,n,m}$: the root is the unique vertex over $0$ while the other vertices are over $\infty$. Note that the degree assignment at edges, which is part of the bi-partite graph data,  is uniquely determined by the relative conditions at infinity and the fact that the graphs are of compact type. Altogether, the virtual localization formula provides an expression of the form
\begin{align} \label{eq:epsilon-star1}
	\epsilon_*\left(I^m_{g,n} \cap [\oM_{g,m}(\mathbb{P}^1,a_1,\dots,a_n)]^{\mathrm{vir}} \right)= \sum_{T\in \PSSRT_{g,n,m}} (b_T)_*(\alpha_1(T)\alpha_2(T))
\end{align}
where $\alpha_{1}(T)$ corresponds to the contribution of $[\oM_{g,m}(\mathbb{P}^1,a_1,\dots,a_n)]^{\mathrm{vir}}$ from~\cite{GV} (see also a convenient reminder in~~\cite[Appendix]{BS22}), while $\alpha_{2}(T)$ corresponds to the contribution of $I_{m,n}$ deduced from~\cite[Equation (3.13)]{GKLS}.  The explicit expression of these classes is 
\begin{align}
 \alpha_1(T)=&\,-u^{-|E(T)|-1}\left(\sum_{i=0}^{g(v_r)} (-1)^i \lambda_i u^{g(v_r)-i} \prod_{e\in E(T)} \frac{a(e)^{a(e)+1}}{a(e)!} \frac{u^{-a(e)}}{ 1 - u^{-1} a(e) \psi_e}\right)\\
& \nonumber \,  \bigotimes_{e \in E(T)} \left(\frac{\DR_{g(v_{e})}\big(a(h_1),\dots,a(h_{|H(v_{e})|-1}),-a(e)\big)}{1+u^{-1}a(e)\psi_{|H(v_e)|}}\right),
\end{align}
and
\begin{align}
 \alpha_2(T)=&\,(-u)^{|E(T)|-1} (-1)^{g+m}u^m \left((-1)^{g(v_r)} \sum_{i=0}^{g(v_r)} \lambda_i u^{g(v_r)-i} \prod_{e \in E(T)}  \frac{a(e)!}{a(e)^{a(e)}} u^{a(e)-1} 
\right)\\
& \nonumber \,  \bigotimes_{e \in E(T)} (-1)^{g(v_e)}\lambda_{g(v_e)}.
\end{align}
Now we use Mumford's relation~\cite{Mumford}
\begin{align}
 \left(\sum_{i=0}^{g(v_r)} (-1)^i \lambda_i u^{g(v_r)-i}\right)\left( \sum_{i=0}^{g(v_r)} \lambda_i u^{g(v_r)-i}  \right) = u^{2g(v_r)}
\end{align} 
to deduce 
\begin{align}
 \alpha_1(T) \alpha_2(T)=&\, (-u)^{2g-2+m} \left(\prod_{e\in E(T)} \frac{a(e)}{-u} \frac{1}{1 - u^{-1} a(e) \psi_e}\right)\\
& \nonumber  \bigotimes_{e \in E(T)} \left(\frac{(-u)^{2g(v_r)}\lambda_{g(v_r)}\DR_{g(v_{e})}\big(a(h_1),\dots,a(h_{|H(v_{e})|-1}),-a(e)\big)}{1+u^{-1}a(e)\psi_{|H(v_e)|}}\right). 
\end{align}
Together with~\eqref{eq:epsilon-star1}, we indeed obtain the identity~\eqref{eq:epsilon-star}.
\end{proof}

\end{document}